\documentclass[a4paper,11pt]{article}
\usepackage{latexsym}
\usepackage{graphicx}
\usepackage{amsfonts}
\usepackage{amsmath} 
\usepackage{amssymb}
\usepackage{amsthm}

\usepackage[dvips, unicode]{hyperref}   
\hypersetup{pdftitle=Density not realizable as the Jacobian determinant of a bilipschitz 
map}
\hypersetup{pdfauthor=Vojt\v{e}ch Kalu\v{z}a}

\author{Vojt\v{e}ch Kalu\v{z}a\thanks{The author was supported by the grant CE-ITI (P202/12/G061) of the Czech Science Foundation and by the grant SVV-2015-260223.}\\
Department of Applied Mathematics\\
Charles University\\
Malostransk\'e n\'am.~25\\
118~00~~Praha~1, Czech Republic}
\title{Density not realizable as the Jacobian determinant of a bilipschitz 
map\thanks{A preliminary version appeared in Czech as a part of
the author's bachelor thesis~\cite{VK} at the Charles University in Prague in~2012.}}


\newcommand{\norm}[1]{\left\|#1\right\|} 
\newcommand{\lnorm}[2]{{\left\|#2\right\|_#1}}
\newcommand{\abs}[1]{\left| #1 \right|} 
\newcommand{\R}{\mathbb{R}}
\newcommand{\Z}{\mathbb{Z}}
\newcommand{\N}{\mathbb{N}}
\newcommand{\edgeab}{\overline{\mathbf{a}\mathbf{b}}}
\newcommand{\edge}[2]{\overline{\mathbf{#1}\mathbf{#2}}}
\newcommand{\rest}[1]{|_{#1}} 
\newcommand{\mb}[1]{\mathbf{#1}}
\newcommand{\set}[2]{\left\{#1, \ldots, #2\right\}}
\DeclareMathOperator{\jac}{Jac}
\DeclareMathOperator{\len}{length}
\DeclareMathOperator{\vl}{vl}


\newtheoremstyle{defnodot}
	{\topsep}
  {\topsep}
  {\normalfont}
  {}
  {\bfseries}
  {.}
  {5pt plus 1pt minus 1pt}
  {\thmname{#1}\thmnumber{ #2}\thmnote{ (#3)}}

\newtheoremstyle{themnodot}
	{\topsep}
  {\topsep}
  {\itshape}
  {}
  {\bfseries}
  {.}
  {5pt plus 1pt minus 1pt}
  {\thmname{#1}\thmnumber{ #2}\thmnote{ (#3)}}

\theoremstyle{themnodot}
\newtheorem*{definition*}{Definition} 
\newtheorem{definition}{Definition}

\newtheorem*{theorem*}{Theorem}
\newtheorem{theorem}[definition]{Theorem}
\newtheorem*{lemma*}{Lemma}

\newtheorem*{proposition*}{Proposition}

\newtheorem*{obs*}{Observation}
\newtheorem{obs}[definition]{Observation}
\newtheorem*{fact*}{Fact}

%

\begin{document}
\maketitle

\begin{abstract}
Are every two separated nets in the plane bilipschitz equivalent?
In the late 1990s, Burago and Kleiner and, independently, McMullen 
resolved this beautiful question negatively. Both solutions are
based on a construction of a density function that is not realizable
as the Jacobian determinant of a bilipschitz map. 
McMullen's construction is simpler than the Burago--Kleiner one, and
we provide a full proof of its nonrealizability, which has not been
available in the literature.
\end{abstract}

\section{Introduction}\label{sec:intro}

\paragraph{Non-equivalent separated nets and nonrealizable density.}

We recall that a \emph{separated net} in the plane is a set $P\subset\R^2$
in which every two points have distance bounded below by some $r>0$ and the distance
between any point in $\R^2$ and the set $P$ is bounded above by another constant $R>0$.
A simple example of a $1$-separated $1$-net is the integer lattice~$\Z^2$.

The following fascinating question was first mentioned by Furstenberg
in the 1960s and it appears in Gromov's book \cite{Grom}:
\emph{Are every two separated nets in the plane bilipschitz 
equivalent?}\footnote{Let $A,B\subseteq\R^2$. We recall that
 a map $\varphi\colon A\to B$ is $L$-\emph{Lipschitz}, for
a real number $L>0$, if $\norm{\varphi(\mathbf{a})-\varphi(\mathbf{b})}\leq L \norm{\mathbf{a}-\mathbf{b}}$ for every $\mathbf{a},\mathbf{b}\in A$. 
We say that $\varphi$ is $L$-\emph{bilipschitz} if 
both $\varphi$ and $\varphi^{-1}$ are $L$-Lipschitz, and
we call $\varphi$ \emph{Lipschitz} or \emph{bilipschitz} if 
it is $L$-Lipschitz or $L$-bilipschitz, respectively,
for some $L>0$. Two separated nets $P$ and $Q$ are bilipschitz equivalent
if there is a bilipschitz bijection $\varphi\colon P\to Q$.}

It was resolved negatively in the late 1990s by
Burago and Kleiner~\cite{BK1} and, independently, by
McMullen~\cite{M}. Both of the counterexamples are 
based on constructing a bounded density function
$\rho\colon\R^2\to\R$ with $\inf\rho>0$ that is not realizable
as the Jacobian of a bilipschitz map. That is, there is no
bilipschitz $\varphi\colon\R^2\to\R^2$ such that $\jac(\varphi)=\rho$
holds almost everywhere (a.e.), where $\jac(\varphi)$ is the determinant
of the Jacobian matrix of $\varphi$. (McMullen also showed
that the existence of such a $\rho$ is actually \emph{equivalent}
to the existence of two non-equivalent separated nets.)

According to Burago and Kleiner, the problem of density not
realizable as the Jacobian of a bilipschitz map was first proposed 
by Moser and Reimann in the~60's. Later, Dacorogna and Moser~\cite{DMo} 
showed that for $\alpha\in(0,1)$, every $\alpha$-H\"older function is locally 
the Jacobian determinant of a~$C^{1,\alpha}$ homeomorphism, and they 
posed the question of whether every continuous function is locally
the Jacobian of a~$C^1$ diffeomorphism. Several other authors studied
the problem of prescribed Jacobian in different settings,
for example Ye~\cite{Ye} in Sobolev spaces.

McMullen's construction is simpler and easier to describe than
the Burago--Kleiner one. But while Burago and Kleiner provide
a~complete proof of the nonrealizability of their construction,
McMullen's construction and its proof are only sketched in four short paragraphs,
with a remark that a~detailed proof can be given along the lines of
the Burago--Kleiner proof. 

The author of this note, as a part of his bachelor thesis~\cite{VK},
tried to adapt the Burago--Kleiner
proof to McMullen's construction, but found this less than straightforward,
and ended up modifying the Burago--Kleiner
technique, introducing  additional tricks, and
adjusting numerical parameters of the construction.
Thus, for the sake of future researchers interested
in the details of McMullen's construction, it seems worth publishing
a complete proof.

\begin{figure}[htb]
\centering
\includegraphics[width=120mm]{./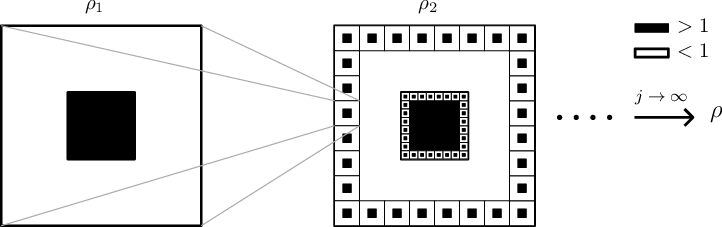}
\caption{\footnotesize{The first two steps of McMullen's construction.}}
\label{fig:1}
\end{figure}

\paragraph{McMullen's construction.}
The nonrealizable function $\rho$ is constructed on the
unit square $S:=[0,1]^2$, as the limit of a sequence 
$\rho_1,\rho_2,\ldots$ of functions, where $\rho_{j}$ is obtained
from $\rho_{j-1}$ by a suitable modification.

To define $\rho_1$, we choose a square $T$ of side $\delta>0$ at the center 
of~$S$ ($\delta$ is one of the parameters of the construction), as in
Figure~\ref{fig:1} left. We define $\rho_1$ as a constant $t_1>1$ on $T$
and as another constant $s_1<1$ on $S\setminus T$. Here $s_1,t_1$ are chosen
so that, first, the average of $\rho_1$ over $S$ is $1$, and second,
the image of~$T$ under any bilipschitz map with Jacobian $\rho_1$ 
has area at least $1-\gamma$, where $\gamma>0$ is another parameter
of the construction. The value of $\gamma$ is chosen small, and thus the image
of $T$ occupies most of the image of~$S$.
McMullen chose $\delta=1/3$ and $\gamma=0.01$ in his sketch, but we will need different values.

To construct $\rho_2$ from $\rho_1$, 
we cover each edge of the squares $S$ and $T$ from inside with much smaller 
squares; those along the edges of $S$ have sidelength $h_2$,
while those along the edges of $T$ have sidelength $\delta h_2$, with $h_2>0$
sufficiently small.
We denote the collection of these new squares by $\mathcal{S}_2$. 

On every square $S'\in \mathcal{S}_2$, we define $\rho_2$ in the same
way as $\rho_1$ was defined on $S$. That is, we consider a smaller
square $T'$ concentric with $S'$ of side $\delta$-times the side of $S'$,
and we set $\rho_2=t_1$ on $T'$ and $\rho_2=s_1$ on $S'\setminus T'$;
see Figure~\ref{fig:1} right. We write $\mathcal{T}_2$ for the collection of the squares $T'$.

On the rest of $S$, similar to $\rho_1$,
the function $\rho_2$ attains a value $t_2$ on the part of $T$ not covered
with $\mathcal{S}_2$, and another value $s_2$ on the part of
$S\setminus T$ not covered with $\mathcal{S}_2$. However, the values of $s_2$ and $t_2$
are slightly different from $s_1$ and $t_1$. Their precise values are determined by
two properties that we want $\rho_2$ to satisfy. Namely, we first choose $t_2$ so that the area of
the image of $T$ under a bilipschitz map with Jacobian $\rho_2$ equals exactly $1-\gamma$, and then
we choose $s_2$ so that the average value of $\rho_2$ on $S$ is exactly $1$.

The construction of $\rho_j$ from $\rho_{j-1}$ follows the same pattern.
We choose $h_j$ sufficiently small. Then we cover the edges of each $S'\in \mathcal{S}_{j-1}$ and of the
corresponding $T'\in\mathcal{T}_{j-1}$ from inside with much smaller squares
forming a collection $\mathcal{S}_{j}$. The sidelengths of these new squares are determined by $h_j$; nevertheless, we cannot say that they are all equal to $h_j$, since the squares being covered have different sidelengths. Instead, we require that the number of squares from $\mathcal{S}_j$ covering an edge of a square in $\mathcal{S}_{j-1}\cup\mathcal{T}_{j-1}$ is $h_{j-1}/h_j$. This implies that $h_j$ is the sidelength of the largest squares in $\mathcal{S}_j$.

We define $\rho_j$ on each square of $\mathcal{S}_{j}$
in the same way as $\rho_1$ was defined on $S$, and we also modify the
values of $\rho_{j-1}$ on each $S'\in \mathcal{S}_{j-1}$
in the same way as was described above for $\rho_2$ on~$S$.
More precisely, for every $S''\in\mathcal{S}_j$ we introduce a smaller
square $T''$ concentric with $S''$ of sidelength $\delta$-times the side of $S''$; these squares $T''$ form a collection $\mathcal{T}_j$. Then we define $\rho_j$ as the constant $t_1$ on $T''$ and as the constant $s_1$ on $S''\setminus T''$. On the rest of each $S'\in\mathcal{S}_{j-1}$, we set $\rho_j$ equal to a constant $t_j$ on the part of $T'$ not covered with $\mathcal{S}_j$ and equal to another constant $s_j$ on the part of $S'\setminus T'$ not covered with $\mathcal{S}_j$. Again, the precise values of $s_j,t_j$ are chosen so that, first, the area of the image of $T'$ under a bilipschitz map with Jacobian $\rho_j$ equals exactly $(1-\gamma)$-times the area of $S'$, and second, the average value of $\rho_j$ on $S'$ is exactly $1$. On the rest of $S$ not contained in $\bigcup\mathcal{S}_{j-1}$, the function $\rho_j$ is equal to $\rho_{j-1}$. That is, during the $j$-th step, we redefine $\rho_{j-1}$ only on the set $\bigcup\mathcal{S}_{j-1}$.

The sequence $h_j$ decreases to $0$ sufficiently fast, namely, so that
$h_{j-1}/h_j\rightarrow\infty$. This ensures that the limit
$\rho=\lim_{j\rightarrow\infty} \rho_j$ is well defined a.e. in $S$, bounded, and
also bounded away from~$0$. This finishes the description
of McMullen's construction.

Now, we are ready to state the theorem which we are going to prove in Section~\ref{sec:proof}:
\begin{theorem}[McMullen]\label{McM}
There exists no bilipschitz map $\varphi\colon S\rightarrow A \subset\R^2$ with $\jac(\varphi) = \rho$ a.e.
\end{theorem}

\paragraph{On differences between the Burago--Kleiner and McMullen's constructions.}
The Burago--Kleiner construction provides a continuous nonrealizable
function, while McMullen's construction sketched above apparently yields only
a~measurable one. 
For explaining the difference, we first describe some of the features of
the Burago--Kleiner construction.

They again work in the unit square $S$.
First, for every $L>1$ and $c>0$, they construct a~measurable function 
$\rho_{L,c}\colon S\rightarrow [1,1+c]$ such that there is no $L$-bilipschitz 
homeomorphism $\varphi\colon S\rightarrow\R^2$ with $\jac(\varphi)=\rho_{L,c}$ a.e. The precise construction of $\rho_{L,c}$, which can be
found in \cite{BK1}, is not important for
us at the moment.

Then they observe that if $\{\rho_{L,c}^k\}_{k=1}^\infty$ is a~sequence of smoothings of~$\rho_{L,c}$ converging to~$\rho_{L,c}$ in~$L^1$, there must be 
some $k_0\in\N$ such that for every $k\geq k_0$, the functions $\rho_{L,c}^k$ are also nonrealizable as Jacobians
of $L$-bilipschitz homeomorphisms, for otherwise,
the Arzel\`a--Ascoli theorem would yield an~$L$-bilipschitz 
homeomorphism $\varphi$ with $\jac(\varphi)=\rho_{L,c}$ a.e. 

Finally, they take a~collection of disjoint squares $S_k\subset S$ converging to a~point $p\in S$, they construct a~new function $\rho\colon S\rightarrow [1,1+c]$ by embedding the function $\rho_{k,\min\{c,\frac{1}{k}\}}$ into $S_k$ for every $k\in\N$, and they define $\rho$ on the rest of~$S$ arbitrarily, while preserving its continuity. Consequently, $\rho$ is continuous and 
it cannot be realized  as the Jacobian of any bilipschitz homeomorphism.

Since we do not know how to prove nonrealizability of McMullen's density
parametrized so that the image of $\rho$ is contained in $(0,1+c]$ with $c>0$
arbitrarily small, we cannot use the method of Burago and Kleiner
outlined above to obtain a continuous version of McMullen's density.
However, it may be possible either to achieve continuity in some other way
without changing the construction too much or devise a better proof.

\section{Preliminaries}

Before we proceed to the proof, we present some definitions and facts.
We denote the $k$-dimensional Lebesgue measure by $\lambda_k$. Since we will deal mainly with the plane, we write just
$\lambda$ instead of $\lambda_2$. We always use the Lebesgue measure unless stated otherwise.

Let $\varphi\colon\R^n\rightarrow\R^n$ be a~map that is (Fr\'echet) differentiable at a point $x\in\R^n$. The matrix consisting of its first partial derivatives at~$x$ is called the \emph{Jacobian matrix} of the map $\varphi$ at the point~$x$.
We denote it by $D\varphi(x)$. The determinant of the Jacobian matrix, $\jac(\varphi)(x):=\det{D\varphi(x)}$, is called the \emph{Jacobian determinant} or simply \emph{Jacobian} of the map $\varphi$ at~$x$. It gives us information about the change of the volume in the neighborhood of~$x$. 

By a~curve we mean an~image of an~interval $I$ under a~continuous map $f$. The length of a~curve $P$ is defined in the usual manner, that is, as $\len(P):=\sup\sum_{k=0}^{n-1}\norm{f(p_{k+1})-f(p_k)}$, where the supremum is taken over all finite partitions of the form $\min I=p_0<p_1<\ldots<p_n=\max I$.

Let us denote the line segment between points $\mathbf{a}$ and $\mathbf{b}$ by $\overline{\mathbf{a}\mathbf{b}}$, the Euclidean distance between $\mb{a}$ and $\mb{b}$ by $\lnorm{2}{\mb{a}-\mb{b}}$, the projection on the first coordinate ($x$-axis) by~$\pi_x$, and the restriction of a~function $f$ to a~set $E$ by~$f\rest{E}$.

We write $\partial E$ for the boundary of a set $E$, that is, the closure of $E$ without the interior of $E$.

\begin{obs}\label{obs:lip}
Let $P$ be the image of an~interval of length~$d$ under an~L-Lipschitz map. Then $\len(P)\leq L d$.
\end{obs}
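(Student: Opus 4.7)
The plan is simply to unfold the definition of length given just above the observation and apply the Lipschitz bound pointwise. Let $f\colon I\to\R^n$ be the $L$-Lipschitz map with $P=f(I)$, where the interval $I$ has length $d$. From the paper's definition, $\len(P)$ is the supremum, over all finite partitions $\min I=p_0<p_1<\cdots<p_n=\max I$, of the polygonal-length sum $\sum_{k=0}^{n-1}\norm{f(p_{k+1})-f(p_k)}$. For any fixed such partition, the $L$-Lipschitz condition yields $\norm{f(p_{k+1})-f(p_k)}\leq L(p_{k+1}-p_k)$ for every $k$, so telescoping bounds the total by $L\sum_{k=0}^{n-1}(p_{k+1}-p_k)=L(\max I-\min I)=Ld$. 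Since this upper bound is uniform in the partition, passing to the supremum gives $\len(P)\leq Ld$.

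I expect no genuine obstacle: the statement is a direct one-line consequence of the definitions, with no analytic input beyond the triangle inequality (already built into the definition of the norm) and the Lipschitz hypothesis. The only fine point worth flagging is that the length defined in the paper is computed via the given parametrization, so ``length of $P$'' in the observation unambiguously refers to the quantity obtained from the prescribed $L$-Lipschitz map $f$, and no issue of reparametrization or of non-injectivity of $f$ needs to be addressed.
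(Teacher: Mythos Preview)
Your argument is correct and is exactly the intended one: the paper itself leaves this observation to the reader, and the direct unfolding of the definition of $\len(P)$ followed by the termwise Lipschitz bound and telescoping is precisely the one-line proof implicit there. There is nothing to add.
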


We leave the easy proof to the reader.

\begin{theorem}\label{jacint}
Let $f\colon A\subseteq\R^k\rightarrow\R^k$ be an injective Lipschitz map. Then for every measurable set $E\subseteq A$, $f(E)$ is also measurable and
\begin{displaymath}
\int_{E}\abs{\jac(f)} d\lambda_k=\lambda_k(f(E)).
\end{displaymath}
\end{theorem}

\begin{proof}
This theorem is a~corollary of the change of variables theorem for Le\-bes\-gue integral and Rademacher's theorem; it can be found in Fremlin's monograph~\cite[Corollary~263F]{Frem}, for example.
\end{proof}

\section{The proof}\label{sec:proof}

In the following, we present the complete proof of Theorem~\ref{McM}, i.e., we prove nonrealizability of the function $\rho$ constructed in Section~\ref{sec:intro}.

Recall that during the $j$-th step of the construction of $\rho$, we introduce a collection of tiny squares $\mathcal{S}_j$ and modify the function $\rho_{j-1}$ on each of the squares from that collection. For each square $S'\in\mathcal{S}_j$ we denote the smaller square placed at the center of~$S'$ by~$T'$. The function $\rho$ has the following properties. For every level $j$ and every $S'\in\mathcal{S}_j$ we have $\int_{S'}{\rho\,d\lambda} = \lambda(S')$ and $\int_{T'}{\rho\,d\lambda} = (1-\gamma)\lambda(S')$.

\begin{proof}[Proof of Theorem~\ref{McM}]
Assume to the contrary that there exists a~bilipschitz homeomorphism $\varphi\colon S\rightarrow\R^2$ with the Jacobian determinant $\jac\varphi=\rho$ a.e.

Let us denote by~$H$ the set consisting of all edges of all covering squares, i.e., the squares in $\bigcup_{j=1}^{\infty}\mathcal{S}_j$, in the construction of the function $\rho$. We set $K:= \sup_{\overline{\mathbf{p}\mathbf{q}}\in H} \frac{\lnorm{2}{\varphi(\mathbf{p})-\varphi(\mathbf{q})}}{\lnorm{2}{\mathbf{p}-\mathbf{q}}}$. Since the map $\varphi$ is bilipschitz, we get $K<\infty$. Now, we choose a~parameter $\alpha:=\alpha(\gamma, \delta, K)>0$ sufficiently small, whose value will be set at the end of the proof. From the definition of the supremum we have that there exists a~covering square $S'$ and one of its edges $\overline{\mathbf{a}\mathbf{b}}$ with $\frac{\lnorm{2}{\varphi(\mathbf{a})-\varphi(\mathbf{b})}}{\lnorm{2}{\mathbf{a}-\mathbf{b}}}>K(1-\alpha)$. We fix this edge $\edgeab$ until the end of the proof.
Without loss of generality, we assume that $\mathbf{a}=(0,0)$, $\mathbf{b}=(b,0)$, $\varphi(\mathbf{a})=(0,0)$, and $\varphi(\mathbf{b})=(b',0)$, with $b>0$ and $b'>K(1-\alpha) b$.

By the construction of~$\rho$, the edge $\overline{\mathbf{a}\mathbf{b}}$ is covered with arbitrarily small squares. In other words, for a~chosen $N_0:=N_0(\alpha, \gamma, \delta, K)$, which will be set at the end of the proof, too, we can find $j\in\N$ and $N\geq N_0$ so that the edge $\overline{\mathbf{a}\mathbf{b}}$ is covered with $N$ squares $S_1, \ldots, S_N\in\mathcal{S}_j$.

These squares form a~tiny long rectangle, which we call~$R$. Let $h$ stand for their sidelength; this means that $h=b/N$.
Inside of every $S_i, i=1,\ldots,N$, we also have the square $T_i\in\mathcal{T}_j$ with $\delta$-times smaller sidelength. For clarity, we add Figure~\ref{fig:2}.
\begin{figure}[htb]
\centering
\includegraphics[width=120mm]{./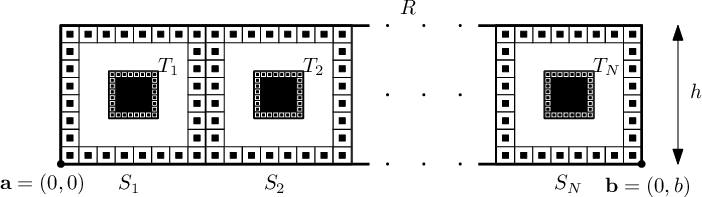}
\caption{\footnotesize{The rectangle~$R$ covering the edge $\overline{\mathbf{a}\mathbf{b}}$.}}
\label{fig:2}
\end{figure} 

The main idea of the proof is that by choosing $\alpha$ to be very small we force $\varphi$ to map the long edges of the rectangle $R$ to almost straight lines stretched by a~factor almost~$K$. The function $\rho$ has been constructed so that most of the mass within $S_i$ is concentrated on~$T_i$. This implies that the majority of each $\varphi(S_i)$ has to be filled up with $\varphi(T_i)$.

On the other hand, the sidelength of each $T_i$ is $\delta$-times smaller than that of~$S_i$. Each side of~$T_i$ is also covered with smaller squares, and thus it cannot be stretched more than by the factor~$K$.
The only way in which all these conditions can be fulfilled is that the images of the long edges of the rectangle $R$ somewhat ripple up above the images of the respective rectangles $T_i$, while between them they have to ripple down. But this forces the images of the long edges of~$R$ to become very long, eventually longer than the constant $K$ allows, which leads to a~contradiction. We illustrate the outlined idea in Figure~\ref{fig:3}.
\begin{figure}[htb]
\centering
\includegraphics[width=120mm]{./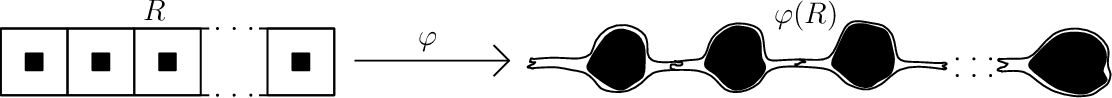}
\caption{\footnotesize{Deformation of the edges of~$R$ under the map $\varphi$.}}
\label{fig:3}
\end{figure}

In order to make this idea rigorous, we look at the change of the length of vertical cuts through the set $\varphi(R)$. The most complicated part of the proof is an~estimation of the length of the boundary of~$\varphi(R)$. Because we have almost no control of~$\varphi$ locally, the shapes of different $\varphi(S_i)$ can be various. We manage the described difficulty by examining the squares $S_i$ in seven-tuples. For this purpose we make an additional technical assumption that the number of squares covering every edge during the construction of~$\rho$ is divisible by seven. Let $R_i$ stand for the rectangle formed by the seven consecutive squares $S_{7i-6},\ldots, S_{7i}$. Without loss of generality, we assume that the edges of the squares $S_i$ covering the edge $\edgeab$ are their bottom edges. Let us write $\mathbf{a}_i$ and $\mathbf{b}_i$ for the left and right vertices at the bottom of the rectangle $R_i$, respectively, and $\mathbf{c}_i$ and $\mathbf{d}_i$ for the left and right vertices on the top of~$R_i$, respectively.

\begin{definition}\label{nicedef}
We call the rectangle $R_i$ with its bottom vertices $\mathbf{a}_i$ and $\mathbf{b}_i$ \textbf{nice} if $\abs{\pi_x(\varphi(\mathbf{a}_i))-\pi_x(\varphi(\mathbf{b}_i))}>K(1-2\alpha)7h$.
\end{definition}

The factor $1-2\alpha$ in the preceding definition is chosen to have a constant fraction of the rectangles $R_i$ \emph{nice}, more precisely, to get the following observation:

\begin{obs}\label{nice}
There are at least $N/14$ \emph{nice} rectangles $R_i$ in $R$.
\end{obs}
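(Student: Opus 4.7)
The plan is an averaging argument on the $x$-projections of the bottom edges $\overline{\mathbf{a}_i\mathbf{b}_i}$ of the $R_i$. First I would establish a uniform per-rectangle upper bound: since $\overline{\mathbf{a}_i\mathbf{b}_i}$ is a concatenation of exactly seven consecutive edges belonging to~$H$, each of length~$h$, the definition of~$K$ together with the triangle inequality give
\[
\abs{\pi_x(\varphi(\mathbf{b}_i))-\pi_x(\varphi(\mathbf{a}_i))}\leq \lnorm{2}{\varphi(\mathbf{b}_i)-\varphi(\mathbf{a}_i)}\leq 7Kh.
\]
Second, the bottom vertices telescope across the rectangles, since $\mathbf{b}_i=\mathbf{a}_{i+1}$, $\mathbf{a}_1=\mathbf{a}$ and $\mathbf{b}_{N/7}=\mathbf{b}$, and therefore
\[
\sum_{i=1}^{N/7}\bigl(\pi_x(\varphi(\mathbf{b}_i))-\pi_x(\varphi(\mathbf{a}_i))\bigr) = \pi_x(\varphi(\mathbf{b}))-\pi_x(\varphi(\mathbf{a})) = b' > K(1-\alpha)Nh.
\]

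Denoting by~$M$ the number of nice rectangles and writing $x_i:=\pi_x(\varphi(\mathbf{b}_i))-\pi_x(\varphi(\mathbf{a}_i))$, I would upper-bound the same sum by using $x_i\leq|x_i|\leq 7Kh$ on the $M$ nice rectangles and $x_i\leq|x_i|\leq 7K(1-2\alpha)h$ on the rest, obtaining
\[
7K(1-\alpha)\frac{N}{7}h \;<\; \sum_{i=1}^{N/7} x_i \;\leq\; 7Kh\Bigl[M + (N/7-M)(1-2\alpha)\Bigr].
\]
Cancelling the common factor $7Kh$ collapses this to $\alpha\cdot(N/7) < 2\alpha M$, i.e.\ $M > N/14$, as required.

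There is no substantial obstacle here: the argument is a one-line pigeonhole once the telescoping and the edge-by-edge $K$-bound are written down. The two small points worth noting are that the $x_i$ may a priori be negative, but since we only need an upper bound on $\sum x_i$, the substitution $x_i\leq|x_i|$ is harmless; and the factor $1-2\alpha$ in Definition~\ref{nicedef} was chosen precisely to make the $\alpha$-deficit in the global $x$-stretch balance against the $2\alpha$ gap per rectangle, forcing at least half of the $N/7$ rectangles to be nice.
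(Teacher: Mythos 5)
Your proof is correct and is essentially the same averaging/pigeonhole argument as the paper's: the paper writes down exactly the inequality $\left(\frac{N}{7}-r\right)K(1-2\alpha)7h+Kr\cdot7h>K(1-\alpha)Nh$ and solves for $r$. You merely make explicit the telescoping of the $\pi_x$-displacements and the edge-by-edge triangle-inequality bound $7Kh$, which the paper leaves implicit.
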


\begin{proof}
Let $r$ stand for the number of \emph{nice} rectangles $R_i$. This implies that for $N/7-r$ rectangles $R_i$ we have $\abs{\pi_x(\varphi(\mathbf{a}_i))-\pi_x(\varphi(\mathbf{b}_i))}\leq K(1-2\alpha)7h$. On the other hand, the edges $\overline{\mathbf{a}_i\mathbf{b}_i}$ connect the vertices $\mathbf{a}$ and $\mathbf{b}$. The horizontal distance of the points $\varphi(\mathbf{a})$ and $\varphi(\mathbf{b})$ has been assumed to be greater than $K(1-\alpha)Nh$. Since no edge $\overline{\mathbf{a}_i\mathbf{b}_i}$ can be stretched more than by the factor~$K$, we can calculate a~lower bound on~$r$:
\begin{displaymath}
\left(\frac{N}{7}-r\right)(K(1-2\alpha)7h)+Kr\cdot7h>K(1-\alpha)Nh
\end{displaymath}
Simple calculation yields $r>N/14$.
\end{proof}

Now, we define the set $D_i:=\pi_x(\varphi(\overline{\mathbf{a}_i\mathbf{b}_i}))\cap\pi_x(\varphi(\overline{\mathbf{c}_i\mathbf{d}_i}))$ for every rectangle $R_i$. Next, we define a~function $f_i\colon D_i\rightarrow [0,\infty)$ measuring the length of vertical cuts through the set $\varphi(R_i)$ at a~point $x\in D_i$, i.e., $f_i(x)=\lambda_1(\{y\in\R|\,(x,y)\in\varphi(R_i)\})$.
It follows from the continuity of the map $\varphi^{-1}$ that the function $f_i$ is Lebesgue integrable for every $R_i$.

We would like to stress that the function $f_i$ is measuring only the length of cuts through the set $R_i$, not the entire $R$. Vertical cuts through $R$ can intersect many $R_i$'s.

The following observation will later help us treat each \emph{nice} rectangle separately. It basically says, that the image of every such an~$R_i$ is drawn almost horizontally and its boundary is not ``too wavy''.

\begin{obs}\label{midRi}
We have $\pi_x(\varphi(S_{7i-3}))\cap\pi_x(\varphi(\overline{\mathbf{a}_i\mathbf{c}_i}))=\emptyset$, and symmetrically, $\pi_x(\varphi(S_{7i-3}))\cap\pi_x(\varphi(\overline{\mathbf{b}_i\mathbf{d}_i}))=\emptyset$ for every \emph{nice} rectangle $R_i$.
\end{obs}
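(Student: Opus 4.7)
The plan is to show, assuming WLOG $\pi_x(\varphi(\mathbf{a}_i))<\pi_x(\varphi(\mathbf{b}_i))$ (the reverse case is symmetric by reflection), that
\[
\pi_x(\varphi(\overline{\mathbf{a}_i\mathbf{c}_i}))\subseteq[\pi_x(\varphi(\mathbf{a}_i))-Kh,\pi_x(\varphi(\mathbf{a}_i))+Kh]
\]
while every $q\in S_{7i-3}$ satisfies $\pi_x(\varphi(q))>\pi_x(\varphi(\mathbf{a}_i))+Kh$; these two facts yield the first disjointness claim. The symmetric assertion about $\overline{\mathbf{b}_i\mathbf{d}_i}$ follows by the same argument with $\mathbf{a}_i,\mathbf{b}_i$ exchanged.

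Set $\mathbf{p}_j:=\mathbf{a}_i+(jh,0)$ for $j=0,\ldots,7$, so $\mathbf{p}_0=\mathbf{a}_i$, $\mathbf{p}_7=\mathbf{b}_i$, and each $\overline{\mathbf{p}_j\mathbf{p}_{j+1}}$ is a bottom edge of one of the seven covering squares of $R_i$ and therefore lies in $H$. My first step sharpens niceness by a trivial averaging: since each $\pi_x(\varphi(\mathbf{p}_{j+1}))-\pi_x(\varphi(\mathbf{p}_j))\le Kh$ (definition of $K$) and $\sum_{j=0}^{6}[\pi_x(\varphi(\mathbf{p}_{j+1}))-\pi_x(\varphi(\mathbf{p}_j))]>7Kh(1-2\alpha)$, every individual increment must exceed $Kh(1-14\alpha)$; consequently $\pi_x(\varphi(\mathbf{p}_j))-\pi_x(\varphi(\mathbf{a}_i))>jKh(1-14\alpha)$ for every $j$, which is the quantitative input for all the bounds below.

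For the containment of $\pi_x(\varphi(\overline{\mathbf{a}_i\mathbf{c}_i}))$ I would use a \emph{chain estimate}. The segment $\overline{\mathbf{a}_i\mathbf{c}_i}$ is the left edge of $S_{7i-6}$, and by the recursive construction every deeper level tiles it by edges of arbitrarily small covering squares. Applying the definition of $K$ to each sub-edge, summing via the triangle inequality, and passing to the limit using continuity of $\varphi$, I obtain $\lnorm{2}{\varphi(p)-\varphi(\mathbf{a}_i)}\le K|p-\mathbf{a}_i|\le Kh$ for every $p\in\overline{\mathbf{a}_i\mathbf{c}_i}$, which gives the claimed interval inclusion.

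The heart of the proof is the lower bound on $\pi_x$ over the whole square $S_{7i-3}$. I would first reduce to the boundary via invariance of domain: since $\varphi$ is a homeomorphism, $\varphi(\mathrm{int}\,S_{7i-3})$ is open in $\Realsq$, so the image of any interior point is the centre of an open disk contained in $\varphi(S_{7i-3})$ and therefore cannot minimise $\pi_x$; hence $\min\pi_x\circ\varphi|_{S_{7i-3}}$ is attained on $\partial S_{7i-3}$. I then apply the chain estimate along each of the four edges of $S_{7i-3}$ (all in $H$), anchoring at the vertices $\mathbf{p}_3,\mathbf{p}_4,\mathbf{q}_3:=\mathbf{p}_3+(0,h),\mathbf{q}_4:=\mathbf{p}_4+(0,h)$, and combining with the step-one lower bounds on $\pi_x(\varphi(\mathbf{p}_3)),\pi_x(\varphi(\mathbf{p}_4))$ and the vertical-edge links $\lnorm{2}{\varphi(\mathbf{q}_k)-\varphi(\mathbf{p}_k)}\le Kh$. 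The delicate case, and the main obstacle, is the top edge $\overline{\mathbf{q}_3\mathbf{q}_4}$: the chain bound anchored at $\mathbf{q}_3$ alone degrades to roughly $\pi_x(\varphi(\mathbf{a}_i))+Kh(1-42\alpha)$ near $\mathbf{q}_4$, which is too weak, so one must take the pointwise maximum of the bounds anchored at both endpoints. A short calculation shows the two linear bounds cross at distance $\approx 7\alpha h$ from $\mathbf{q}_3$, giving a worst-case value of roughly $\pi_x(\varphi(\mathbf{a}_i))+Kh(2-49\alpha)$. This still exceeds $\pi_x(\varphi(\mathbf{a}_i))+Kh$ once $\alpha$ is chosen small enough (e.g.\ $\alpha<1/49$); the bottom, left and right edges give strictly larger lower bounds, and the required disjointness follows.
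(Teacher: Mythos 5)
Your argument is correct, and it establishes the observation by a more elaborate route than the paper's. The paper anchors everything at the single corner $\mathbf{f}_i=\mathbf{a}_i+(4h,0)$ (your $\mathbf{p}_4$): niceness together with the trivial bound $\abs{\pi_x(\varphi(\mathbf{b}_i))-\pi_x(\varphi(\mathbf{f}_i))}\leq 3Kh$ gives $\abs{\pi_x(\varphi(\mathbf{a}_i))-\pi_x(\varphi(\mathbf{f}_i))}>Kh(4-14\alpha)$, and then $\varphi(S_{7i-3})$ is enclosed in the disk of radius $2Kh$ centred at $\varphi(\mathbf{f}_i)$ (any boundary point of $S_{7i-3}$ is joined to $\mathbf{f}_i$ by a path of edges of $H$ of length at most $2h$, half the perimeter), so the two projected sets are separated by $Kh(4-14\alpha)-2Kh-Kh=Kh(1-14\alpha)$, which only needs $\alpha<1/14$. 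Your two-anchor, one-sided version --- the per-increment lower bound $Kh(1-14\alpha)$ on each of the seven bottom edges, followed by the pointwise maximum of the two linear chain bounds on each edge of $\partial S_{7i-3}$ --- is quantitatively sound (your worst case $Kh(2-49\alpha)$ on the top edge checks out), but it costs a slightly worse threshold $\alpha<1/49$ (harmless, since $\alpha\to0$ at the end) and noticeably more computation without buying anything here: once the anchor $\varphi(\mathbf{f}_i)$ is known to be far from $\varphi(\mathbf{a}_i)$ in the $x$-direction, the isotropic disk bound suffices and no directional information about $\varphi(S_{7i-3})$ is needed. What your write-up does add, and what the paper leaves implicit, are precisely the two technical points that actually require justification: that $K$ controls $\varphi$ at \emph{all} points of an edge of $H$ and not just at its endpoints (via the dense set of deeper-level vertices, the triangle inequality along the chain of sub-edges, and continuity of $\varphi$), and that interior points of $S_{7i-3}$ may be discarded by invariance of domain. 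Both of these justifications are correct, and the same two facts are also what legitimise the paper's two disk containments.
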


\begin{figure}[htb]
\centering
\includegraphics[width=120mm]{./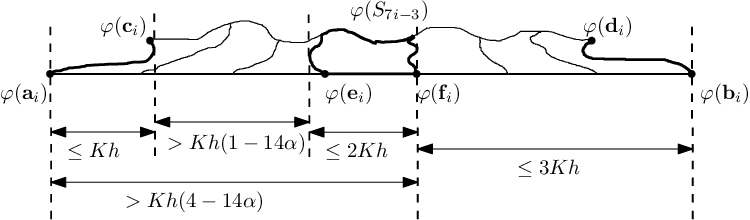}
\caption{\footnotesize{The image of \emph{nice} $R_i$ under $\varphi$.}}
\label{fig:4}
\end{figure}

\begin{proof}
Let $\mathbf{e}_i$ stand for the bottom left vertex of~$S_{7i-3}$ and $\mathbf{f}_i$ for its bottom right vertex. This implies $\lnorm{2}{\mathbf{a}_i-\mathbf{f}_i}=4h$ and $\lnorm{2}{\mathbf{b}_i-\mathbf{f}_i}=3h$. Since the rectangle $R_i$ is \emph{nice}, we also have $\abs{\pi_x(\varphi(\mathbf{a}_i))-\pi_x(\varphi(\mathbf{f}_i))}>K(1-2\alpha)7h-3Kh=Kh(4-14\alpha)$.

The set $\varphi(S_{7i-3})$ has to lie within the circle with radius $2Kh$ centered at~$\varphi(\mathbf{f}_i)$, while the set $\varphi(\overline{\mathbf{a}_i\mathbf{c}_i})$ lies within the circle of radius $Kh$ centered at~$\varphi(\mathbf{a}_i)$. We conclude that the distance between the sets $\pi_x(\varphi(S_{7i-3}))$ and $\pi_x(\varphi(\overline{\mathbf{a}_i\mathbf{c}_i}))$ is at least $Kh(4-14\alpha)-2Kh-Kh=Kh(1-14\alpha)$, which is positive for $\alpha\in(0,1/14)$. This proof is outlined in Figure~\ref{fig:4}.

The second part of the observation is obtained symmetrically.
\end{proof}

The rectangle $R_i$ is formed by the squares $S_{7i-6},\ldots, S_{7i}$. We define the sets $V_i:=\pi_x(\varphi(T_{7i-3}))$ and $C_i:=\pi_x(\varphi(S_{7i-3}))\setminus\pi_x(\bigcup_{j=7i-6}^{7i}\varphi(T_j))$; so we have $V_i\cap C_i=\emptyset$. By Observation~\ref{midRi} it is clear that $\pi_x(\varphi(S_{7i-3}))\subseteq D_i$, and thus $C_i, V_i\subseteq D_i$ for \emph{nice} $R_i$. 

Observation~\ref{midRi} implies that, whenever $R_i$ is \emph{nice}, for every $x\in V_i\cup C_i$, all the points of intersection of the vertical cut at $x$ with $\partial\varphi(R_i)$ also lie on $\partial\varphi(R)$. Indeed, since the map $\varphi$ is a~homeomorphism, the images of the long edges of~$R_i$, which form a~part of $\partial\varphi(R_i)$, are also part of $\partial\varphi(R)$. We will use these points to bound the length of the boundary of $\varphi(R)$.

It is possible that the sets $V_i, V_{i+1}, C_i$, and $C_{i+1}$ are not mutually disjoint. Let $x$ be a common point of $C_i$ and $C_{i+1}$, for example. We already know that, if $R_i$ and $R_{i+1}$ are both \emph{nice}, the points of intersection of $\partial\varphi(R_i)$ and $\partial\varphi(R_{i+1})$ with the vertical cut at $x$ lie on the boundary of $\varphi(R)$. The fact that $\varphi$ is a~homeomorphism implies that all these points are different. This is a~crucial observation in our proof, because it allows us to do the estimates for each of the \emph{nice} rectangles separately and then sum them up.

Let us set $v_i:=\lambda_1(V_i)$ and $c_i:=\lambda_1(C_i)$. Since the length of the edge of~$T_i$ is $\delta h$, we have that the length of the boundary of $\varphi(T_{7i-3})$ is at most $4K\delta h$, which implies $\lambda_1(\pi_x(\varphi(T_{7i-3})))\leq 2K\delta h$, and hence $v_i\leq 2K\delta h$ for every $i \in \set{1}{N/7}$.

For every \emph{nice} $R_i$ we have $\lambda_1(\pi_x(\varphi(S_{7i-3})))>K(1-2\alpha)7h-6Kh=Kh(1-14\alpha)$, and thus $c_i>Kh(1-14\alpha)-7\cdot 2K\delta h=Kh(1-14\alpha-14\delta)$. Because we need $c_i>0$, we have to choose $\delta\in(0, 1/14)$ and $\alpha\in(0,(1-14\delta)/14)$.

By Theorem \ref{jacint} and since we assume $\jac(\varphi)=\rho$ a.e., we have $\lambda(\varphi(T_{7i-3}))=(1-\gamma)h^2$. We define two constants $h_V^i$ and $h_C^i$ denoting the average values of~$f_i$ over the sets $V_i$ and $C_i$, respectively. In other words, the following holds:
\begin{displaymath}
\lambda(\varphi(T_{7i-3}))<\int_{V_i}f_i\,d\lambda_1=:h_V^i\!\cdot\!v_i
\end{displaymath}
\begin{displaymath}
\lambda\left(\bigcup_{j=7i-6}^{7i}\varphi(S_j)\setminus \bigcup_{j=7i-6}^{7i}\varphi(T_j)\right)>\int_{C_i}f_i\,d\lambda_1=:h_C^i\!\cdot\! c_i.
\end{displaymath}
The upper bound $v_i\leq2K\delta h$ yields
\begin{displaymath}
h_V^i>\frac{\lambda(\varphi(T_{7i-3}))}{v_i}\geq\frac{(1-\gamma)h^2}{2K\delta h}=\frac{1}{K}\cdot h\cdot\frac{1-\gamma}{2\delta}.
\end{displaymath}
Using the lower bound $c_i>Kh(1-14\alpha-14\delta)$, we deduce that the following holds for every \emph{nice} $R_i$:
\begin{equation*}
\begin{split}
h_C^i<\frac{\lambda(\bigcup_{j=7i-6}^{7i}{\varphi(S_j)}\setminus\bigcup_{j=7i-6}^{7i}{\varphi(T_j)})}{c_i}&<\frac{7\gamma\cdot h^2}{Kh(1-14\alpha-14\delta)}\\
&=\frac{1}{K}\cdot h\cdot\left(\frac{7\gamma}{1-14\alpha-14\delta}\right).
\end{split}
\end{equation*}

Since $h_V^i$ and $h_C^i$ are the average values of~$f_i$ over $V_i$ and $C_i$, respectively, we get that for every \emph{nice} $R_i$ there must be two points $x_i\in V_i$ and $y_i\in C_i$ such that $f_i(x_i)\geq h_V^i$ and $f_i(y_i)\leq h_C^i$. Thus we have $f_i(x_i)-f_i(y_i)\geq h_V^i-h_C^i$.
Furthermore, we can bound the last term using the bounds on $h_V^i$ and $h_C^i$ derived above. That is, we infer that $h_V^i-h_C^i>\frac{1}{K}\cdot h\cdot\left(\frac{1-\gamma}{2\delta}-\frac{7\gamma}{1-14\alpha-14\delta}\right):=\Delta$. This bound is already independent of $i$. Clearly, $\Delta>0$ if the parameters are chosen appropriately.

Now, we would like to argue that $\Delta$ is the lower bound on the change of height of $\varphi(R)$ over $V_i\cup C_i$ for every \emph{nice} $R_i$. Indeed, it is true that there are two points $\mathbf{u}_i, \mathbf{u}'_i$, the former from the image of the bottom edge $\overline{\mathbf{a}_i\mathbf{b}_i}$ of $R_i$, the latter from the image of the upper edge $\overline{\mathbf{c}_i\mathbf{d}_i}$, such that $\mathbf{u}_i=(x_i,u_i)$, $\mathbf{u}'_i=(x_i,u'_i)$ and $\abs{u'_i-u_i}\geq h_V^i$.
But the same thing about $y_i$ and $h_C^i$ has to be said with a little more care.

The problem is that the vertical cut through $\varphi(R_i)$ at $y_i$ does not have to be connected, i.e., it may consist of several line segments even for \emph{nice} $R_i$. But we know that the length of these line segments is at most $h_C^i$ in total, and hence every line segment of this cut is at most $h_C^i$ long. Consequently, we infer that there are two points $\mathbf{l}_i\in\varphi\left(\overline{\mathbf{a}_i\mathbf{b}_i}\right), \mathbf{l}'_i\in\varphi\left(\overline{\mathbf{c}_i\mathbf{d}_i}\right)$ such that $\mathbf{l}_i=(y_i, l_i)$, $\mathbf{l}'_i=(y_i, l'_i)$ and $\abs{l'_i-l_i}\leq h_C^i$. The situation is depicted in Figure~\ref{fig:6}.

\begin{figure}[htb]
\centering
\includegraphics[width=105mm]{./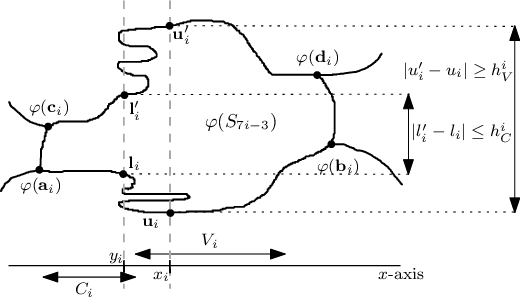}
\caption{\footnotesize{The lower bound on the change of height of $\varphi(R_i)$.}}
\label{fig:6}
\end{figure}

We would like to combine Observation~\ref{nice} and the discussion above to bound below the vertical distance that has to be overcome by $\varphi(R)$. What we mean by this precisely is explained in the following definition:

\begin{definition*}
Let $\Gamma$ be a~Lipschitz map $[0,1]\rightarrow\R^2$ and also a~curve defined by the map. The \textbf{vertical length} of the curve $\Gamma$, denoted by $\vl(\Gamma)$, is defined as $\sup\sum_{k=0}^{n-1}\abs{\Gamma_2(p_{k+1})-\Gamma_2(p_k)}$, where the supremum is taken over all finite increasing sequences $\{p_k\}_{k=0}^n$ of numbers in $[0,1]$ and $\Gamma_2$ denotes the second coordinate function, i.e., the $y$-coordinate of $\Gamma$.
\end{definition*}

In other words, the vertical length of a curve is its length when measuring the distance only in the $y$-coordinate.

Let $J$ be the set of indices $i$ such that $R_i$ is \emph{nice}. By the above discussion, it follows that $\vl(\partial\varphi(R))$ has to be at least $\sum_{i\in J}{\left(\abs{u'_i-l'_i}+\abs{u_i-l_i}\right)}$. Next, we bound it below using the triangle and the reverse triangle inequalities:
\begin{equation*}
\begin{split}
\sum_{i\in J}{\left(\abs{u'_i-l'_i}+\abs{u_i-l_i}\right)} &\geq \sum_{i\in J}\abs{u'_i-l'_i+l_i-u_i}\\
\geq \sum_{i\in J}{\Bigl|\abs{u'_i-u_i}-\abs{l'_i-l_i}\Bigr|} &= \sum_{i\in J}{\left(\lnorm{2}{\mb{u}'_i-\mb{u}_i}-\lnorm{2}{\mb{l}'_i-\mb{l}_i}\right)}.
\end{split}
\end{equation*}

We have chosen the points $\mb{u}'_i, \mb{u}_i, \mb{l}'_i$, and $\mb{l}_i$ so that, for every nice $R_i$, $\lnorm{2}{\mb{u}'_i-\mb{u}_i}-\lnorm{2}{\mb{l}'_i-\mb{l}_i}>\Delta$. By Observation~\ref{nice} we know that there are at least $N/14$ \emph{nice} rectangles $R_i$, and thus $\sum_{i\in J}{\left(\lnorm{2}{\mb{u}'_i-\mb{u}_i}-\lnorm{2}{\mb{l}'_i-\mb{l}_i}\right)}$ is at least $\Delta N/14$.

Let $P$ stand for $\left(\partial\varphi(R)\right)\setminus\varphi(\edge{a}{b})$.
It is easy to see that
\begin{displaymath}
\vl(P)\geq\sum_{i\in J}\abs{u'_i-l'_i},
\end{displaymath}
since the points $\mb{u}'_i, \mb{l}'_i$ are lying on $P$ in order specified by $i$. Now, we aim to obtain a lower bound on the latter quantity.

Before we proceed, we would like to describe the strategy used in the rest of the proof. We know that the curve $\varphi(\edge{a}{b})$ is at most $Kb$ long and connects the points $\varphi(\mb{a})$ and $\varphi(\mb{b})$ at distance almost $Kb$. This means that $\vl(\varphi(\edge{a}{b}))$ has to be very small. On the other hand, we have a lower bound on $\vl\left(\partial\varphi(R)\right)$, which implies a lower bound on $\vl(P)$. Moreover, $P$ also connects the points $\varphi(\mb{a})$ and $\varphi(\mb{b})$; therefore, together with the lower bound on $\vl(P)$, we can calculate a lower bound on $\len(P)$ that becomes large if the parameters are chosen appropriately, eventually larger than the constant $K$ allows. This will be the desired contradiction.

As we already know, the vertical length of the whole boundary of $\varphi(R)$ is at least $\sum_{i\in J}{\left(\abs{u'_i-l'_i}+\abs{u_i-l_i}\right)}$, which in turn is at least $\Delta N/14$. Subtracting the second terms of the sum we get the following inequality:
\begin{equation}
\label{l_bound}
\begin{split}
\frac{\Delta N}{14}-\sum_{i\in J}\abs{u_i-l_i}\leq\sum_{i\in J}\abs{u'_i-l'_i}\leq\vl(P).
\end{split}
\end{equation}

Consequently, in order to get the desired lower bound on $\vl(P)$, it suffices to derive an upper bound on the quantity $\sum_{i\in J}\abs{u_i-l_i}$, which in turn is a lower bound on $\vl(\varphi(\edgeab))$. To this end, we use the following simple geometric considerations.

The curve $\varphi(\edgeab)$ connects the points $\varphi(\mb{a})$ and $\varphi(\mb{b})$ and has a certain length. If we imagine that this curve becomes an inextensible string pinned to the points $\varphi(\mb{a})$ and $\varphi(\mb{b})$ in the plane, we can use it to construct an ellipse. We pull the string using a pencil to form a triangle. Then,
with a tip of the pencil, while keeping the string taut, we draw an ellipse. This is known as the gardener's construction.
An upper bound on the length of the minor axis of the described ellipse is calculated in the next observation, in which we also show that it upper bounds $\vl(\varphi(\edgeab))$.

\begin{obs}\label{obs:ellipse}
Let $\Gamma\colon[0,1]\rightarrow\R^2$ be a Lipschitz curve with endpoints $\varphi(\mathbf{a})$ and $\varphi(\mathbf{b})$ of length at most $Kb$. Then for every point $\mathbf{p}\in\Gamma$ the distance between the point $\mathbf{p}$ and the line passing through $\varphi(\mathbf{a})$ and $\varphi(\mathbf{b})$ is less than $Kb/2\cdot\sqrt{\alpha(2-\alpha)}$. As a consequence we have that $\vl(\Gamma)<Kb\cdot\sqrt{\alpha(2-\alpha)}$.
\end{obs}

\begin{proof}
By the arguments described above, the curve $\Gamma$ has to lie inside the ellipse with two focal points $\varphi(\mathbf{a}),\varphi(\mathbf{b})$ and sum of the distances from any point on the ellipse to its foci equal to $Kb$.

We can calculate the upper bound on the length of the semi-minor axis of this ellipse, because we know that $\lnorm{2}{\varphi(\mathbf{a})-\varphi(\mathbf{b})}>Kb(1-\alpha)$; using the Pythagorean theorem we have that the length of the semi-minor axis is less than $\sqrt{(Kb/2)^2-(Kb(1-\alpha)/2)^2}=Kb/2\cdot\sqrt{\alpha(2-\alpha)}$.

Now we prove that the length of the semi-minor axis upper bounds $\vl(\Gamma)/2$. Let us write $\Gamma_2$ for the $y$-coordinate of $\Gamma$. The definition of $\vl(\Gamma)$ implies that for every $\varepsilon>0$ we can find an increasing sequence $\{p_k\}_{k=0}^M\subset[0,1]$ such that $\sum_{k=1}^{M-1}\abs{\Gamma_2(p_{k+1})-\Gamma_2(p_k)}>(1-\varepsilon)\vl(\Gamma)$. We split the set of indices $\set{0}{M-1}$ into two collections $\mathcal{P}$ and $\mathcal{N}$. The collection $\mathcal{P}$ contains all $k\in\set{0}{M-1}$ such that $\Gamma_2(p_{k+1})-\Gamma_2(p_k)\geq 0$. Then we write $\mathcal{N}$ for the set $\set{0}{M-1}\setminus\mathcal{P}$.

We can assume that $\Gamma(p_0)=\varphi(\mathbf{a})$ and $\Gamma(p_M)=\varphi(\mathbf{b})$. Since the points $\varphi(\mathbf{a})$ and $\varphi(\mathbf{b})$ have the same $y$-coordinate, the sum $\sum_{k=1}^{M-1}{\Gamma_2(p_{k+1})-\Gamma_2(p_k)}$ is equal to zero, and thus
\begin{displaymath}
\sum_{k\in\mathcal{P}}{\Gamma_2(p_{k+1})-\Gamma_2(p_k)}=\sum_{k\in\mathcal{N}}\abs{\Gamma_2(p_{k+1})-\Gamma_2(p_k)}.
\end{displaymath}

We consider a collection of line segments $L_k:=\overline{\Gamma(p_k)\Gamma(p_{k+1})}$, where $k\in\set{0}{M-1}$. These line segments form a piecewise linear curve $\Lambda$ connecting the points $\varphi(\mathbf{a})$ and $\varphi(\mathbf{b})$ such that $\len(\Lambda)\leq\len(\Gamma)$ and $\vl(\Lambda)=\sum_{k=1}^{M-1}\abs{\Gamma_2(p_{k+1})-\Gamma_2(p_k)}$.

Finally, we permute the order in which the segments $L_k$ are connected, that is, we translate each of them and reconnect them in a different order creating an auxiliary piecewise linear curve $\Lambda'$; firstly, we take the line segments $L_i$ for all $i\in\mathcal{P}$ in arbitrary order, shift them and connect them so that they form a continuous piecewise linear curve starting at $\varphi(\mathbf{a})$ and heading only upwards. We end up at a point which we denote by $\mb{q}=(q_1, q_2)$. Secondly, we continue from $\mb{q}$ with all the segments $L_i$ such that $i\in\mathcal{N}$ in arbitrary order and connect them so that, after leaving $\mb{q}$, the curve $\Lambda'$ is going always downwards. Thus, we end up in $\varphi(\mathbf{b})$. The process is illustrated in Figure~\ref{fig:bound_on_lv}.

\begin{figure}[htb]
\centering
\includegraphics[width=110mm]{./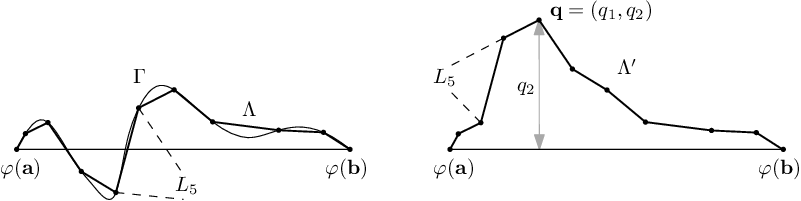}
\caption{\footnotesize{Bounding the vertical length of the curve $\Gamma$.}}
\label{fig:bound_on_lv}
\end{figure}

Clearly, $\len(\Lambda')=\len(\Lambda)$ and $\vl(\Lambda')=\vl(\Lambda)$. Since the $y$-coordi\-nate of the points $\varphi(\mathbf{a})$ and $\varphi(\mathbf{b})$ is $0$, we see that $\vl(\Lambda')=2q_2$. On the other hand, by the construction of $\Lambda'$, $q_2=\sum_{k\in\mathcal{P}}{\Gamma_2(p_{k+1})-\Gamma_2(p_k)}$.
By the first part of this observation applied to $\Lambda'$, we have that $q_2< Kb/2\cdot\sqrt{\alpha(2-\alpha)}$.
In total, we have proved that $(1-\varepsilon)\vl(\Gamma)<2q_2<Kb\cdot\sqrt{\alpha(2-\alpha)}$.
Letting $\varepsilon$ go to zero finishes the argument.
\end{proof}

Applying the observation above to $\varphi(\edgeab)$, we infer that $Kb\cdot\sqrt{\alpha(2-\alpha)}>\vl(\varphi(\edgeab))\geq \sum_{i\in J}\abs{u_i-l_i}$.

We define $\Omega:=\frac{\Delta N}{14}-Kb\cdot\sqrt{\alpha(2-\alpha)}$, which is the lower bound on $\frac{\Delta N}{14}-\sum_{i\in J}\abs{u_i-l_i}$. Using the inequality~(\ref{l_bound}), we get that $\Omega$ is also the lower bound on $\vl(P)$.

We now use the bound on $\vl(P)$ to obtain a lower bound on $\len(P)$, which will, eventually, lead to a contradiction.

We recall that $\lnorm{2}{\varphi(\mathbf{a})-\varphi(\mathbf{b})}>Kb(1-\alpha)$, and moreover, that $\varphi(\mathbf{a})$ and $\varphi(\mathbf{b})$ lie on the $x$-axis.

In order to compute the lower bound on $\len(P)$, we can start with the argumentation from the second part of the proof of Observation~\ref{obs:ellipse}. That is, for every $\varepsilon>0$ we can find a finite sequence of points on $P$ containing both $\varphi(\mathbf{a})$ and $\varphi(\mathbf{b})$ with the following property: if we connect these points with line segments $L_k$ in the order in which they lie on $P$, we get a piecewise linear curve $\Lambda$ such that $\vl(P)\geq\vl(\Lambda)>(1-\epsilon)\vl(P)$ and $\len(\Lambda)\leq\len(P)$. Then we permute the segments $L_k$ in the way described in the proof of Observation~\ref{obs:ellipse} and form an auxiliary piecewise linear curve $\Lambda'$ such that $\len(\Lambda')=\len(\Lambda)$ and $\vl(\Lambda')=\vl(\Lambda)$. Additionally, there is a point $\mb{q}\in\Lambda'$ such that between $\varphi(\mathbf{a})$ and $\mb{q}$ is $\Lambda'$ heading only upwards, while between $\mb{q}$ and $\varphi(\mathbf{b})$ it goes only downwards.

Subsequently, we make one further simplification---we consider the line segments $\overline{\varphi(\mb{a})\mb{q}}$ and $\overline{\mb{q}\varphi(\mb{b})}$ and denote the curve they form by $\Lambda''$. It is still true that $\vl(\Lambda'')=\vl(\Lambda)>(1-\epsilon)\vl(P)$ and $\len(\Lambda'')\leq\len(P)$. As before, we let $\varepsilon$ go to zero.
\begin{figure}[htb]
\centering
\includegraphics[width=110mm]{./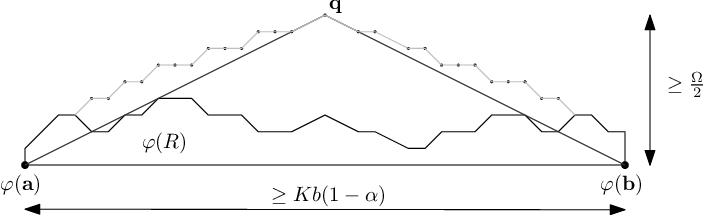}
\caption{\footnotesize{The lower bound on $\len(P)$.}}
\label{fig:5}
\end{figure}

The curve $\Lambda''$ is the shortest when it forms the legs of an isosceles triangle. Therefore, in order to calculate a lower bound on $\len(P)$, it suffices to calculate a lower bound on the length of the legs of an isosceles triangle of height $\vl(P)/2\geq\Omega/2$ with the base $\overline{\varphi(\mb{a})\varphi(\mb{b})}$. The whole idea is illustrated in Figure~\ref{fig:5}. Again, using the Pythagorean theorem we conclude
\begin{displaymath}
\len(P)>2\sqrt{\left(\frac{\Omega}{2}\right)^2+\left(\frac{Kb(1-\alpha)}{2}\right)^2}=\sqrt{\Omega^2+(Kb(1-\alpha))^2}.
\end{displaymath}
The curve $P$ consists of the images of $N+2$ line segments of length $h$, and thus $\len(P)\leq K(N+2)h$:
\begin{displaymath}
\Omega^2+K^2b^2(1-\alpha)^2\leq K^2(N+2)^2h^2.
\end{displaymath}
Substituting for $\Omega=\frac{\Delta N}{14}-Kb\cdot\sqrt{\alpha(2-\alpha)}$ and for $b=Nh$, we obtain, with some calculations,
\begin{displaymath}
\frac{\Delta^2N^2}{196}-\frac{\Delta N^2}{7}Kh\sqrt{\alpha(2-\alpha)}\leq K^2h^2(4N+4).
\end{displaymath}
Recall that $\Delta=\frac{1}{K}\cdot h\cdot \left(\frac{1-\gamma}{2\delta}-\frac{7\gamma}{1-14\alpha-14\delta}\right)$. As we already noted earlier, $\Delta$ is positive assuming an appropriate choice of $\gamma, \delta$ and $\alpha$. Defining $q=q(\alpha, \gamma, \delta):=\frac{1-\gamma}{2\delta}-\frac{7\gamma}{1-14\alpha-14\delta}>0$, we substitute for $\Delta=\frac{1}{K}hq$:
\begin{displaymath}
\frac{h^2q^2N^2}{196K^2}-\frac{Kh^2qN^2}{7K}\sqrt{\alpha(2-\alpha)}\leq K^2h^2(4N+4).
\end{displaymath}
Rearranging the above inequality and substituting $N_0\leq N$, we obtain:
\begin{displaymath}
q^2\leq 196K^4\bigg(\frac{4}{N_0}+\frac{4}{N_0^2}\bigg)+28qK^2\sqrt{\alpha(2-\alpha)}.
\end{displaymath}

From the last inequality we can see that for $N_0\rightarrow\infty$ and $\alpha\rightarrow 0$ the right hand side of the inequality converges to zero, while $q$ slightly grows up to its limit value $\frac{1-\gamma}{2\delta}-\frac{7\gamma}{1-14\delta}$. Consequently, for any $\delta\in(0,1/14)$ and $\gamma\in(0,1-14\delta)$ there is a choice of $\alpha=\alpha(\gamma, \delta, K)$ sufficiently small and of $N_0=N_0(\alpha, \gamma, \delta, K)$ large enough leading to a~contradiction.\qedhere
\end{proof} 

\paragraph{Acknowledgements.} I would like to thank my supervisor Professor Ji\v{r}\'i Matou\v{s}ek from Department of Applied Mathema\-tics, Faculty of Mathematics and Physics, Charles University in Prague, for valuable advice and help he has given me during the~work on my bachelor thesis~\cite{VK} and in writing this article. Honor his memory.

\def\bibname{References}

\bibliographystyle{alpha}

\bibliography{citations}

\begin{thebibliography}{McM98}

\bibitem[BK98]{BK1}
D.~Burago and B.~Kleiner.
\newblock Separated nets in {Euclidean} space and {Jacobians} of {bi-Lipschitz}
  maps.
\newblock {\em Geometric And Functional Analysis}, 8:273--282, 1998.

\bibitem[DM90]{DMo}
B.~Dacorogna and J.~Moser.
\newblock On a partial differential equation involving the {Jacobian}
  determinant.
\newblock {\em Annales de l'institut Henri Poincar\'e (C) Analyse non
  lin\'eaire}, 7(1):1--26, 1990.

\bibitem[Fre00]{Frem}
D.~H. Fremlin.
\newblock {\em Measure Theory}.
\newblock Number vol. 2: Broad Foundations in Measure Theory. Torres Fremlin,
  2000.

\bibitem[Gro93]{Grom}
M.~L. Gromov.
\newblock {\em Geometric Group Theory: Asymptotic invariants of infinite
  groups}.
\newblock London Mathematical Society lecture note series. Cambridge University
  Press, 1993.

\bibitem[Kal12]{VK}
V.~Kalu\v{z}a.
\newblock {Lipschitz mappings of discrete sets}.
\newblock Bachelor thesis, Faculty of Mathematics and Physics, Charles
  University in Prague, 2012.
\newblock [in Czech].

\bibitem[McM98]{M}
C.~T. McMullen.
\newblock Lipschitz maps and nets in {Euclidean} space.
\newblock {\em Geometric And Functional Analysis}, 8:304--314, 1998.

\bibitem[Ye94]{Ye}
D.~Ye.
\newblock Prescribing the {Jacobian} determinant in {Sobolev} spaces.
\newblock {\em Annales de l'institut Henri Poincar\'e (C) Analyse non
  lin\'eaire}, 11(3):275--296, 1994.

\end{thebibliography}

\end{document}